\def\Xint#1{\mathchoice
{\XXint\displaystyle\textstyle{#1}}%
{\XXint\textstyle\scriptstyle{#1}}%
{\XXint\scriptstyle\scriptscriptstyle{#1}}%
{\XXint\scriptscriptstyle%
\scriptscriptstyle{#1}}%
\!\int}
\def\XXint#1#2#3{{\setbox0=\hbox{$#1{#2#3}{%
\int}$ }
\vcenter{\hbox{$#2#3$ }}\kern-.6\wd0}}
\def\barint{\, \Xint -} % \, corrects the \! used in the definition
\def\bariint{\barint_{} \kern-.4em \barint}
\def\bariiint{\bariint_{} \kern-.4em \barint}
\renewcommand{\iint}{\int_{}\kern-.34em \int} %\ minor space between the integrals
\renewcommand{\iiint}{\iint_{}\kern-.34em \int} %\ minor space between the integrals
\DeclareMathAlphabet{\mathcal}{OMS}{cmsy}{m}{n}
\theoremstyle{plain}
\newtheorem{theorema}{Theorem}
\newtheorem{theoremabis}{Theorem}
\newtheorem{theorem}{Theorem}[section]
\newtheorem{definition}[theorem]{Definition}
\newtheorem{lemma}[theorem]{Lemma}
\theoremstyle{definition}
\newtheorem{remark}[theorem]{Remark}
\newcommand{\R}{\mathbb{R}}
\newcommand{\p}{\partial}
\newcommand{\ep}{\varepsilon}
\let\tilde\relas
\newcommand{\tilde}[1]{\widetilde{#1}}
\definecolor{darkgreen}{rgb}{0,0.5,0}
\definecolor{darkblue}{rgb}{0,0,0.7}
\definecolor{darkred}{rgb}{0.9,0.1,0.1}
\definecolor{lightblue}{rgb}{0,0.51,1}
\numberwithin{equation}{section}
\setlist[enumerate]{leftmargin=*}
\title{Epsilon regularity for the Navier-Stokes equations via 
weak-strong uniqueness}
\author[D. Albritton]{Dallas Albritton}
\address[D. Albritton]{Department of Mathematics,
Princeton University,
Princeton, NJ 08544, USA}
\email{dalbrit@princeton.edu}
\author[T. Barker]{Tobias Barker}
\address[T. Barker]{Department of Mathematical Sciences, University of Bath, Bath BA2 7AY, UK}
\email{tobiasbarker5@gmail.com}
\author[C. Prange]{Christophe Prange}
\address[C. Prange]{Cergy Paris Universit\'e, Laboratoire de Math\'ematiques AGM, UMR CNRS 8088, France}
\email{christophe.prange@cyu.fr}
\dedicatory{In honor of Olga Lady\v{z}enskaja}
\date{\today}
\begin{document}
\begin{abstract}
We give a new concise proof of a certain one-scale epsilon regularity criterion using weak-strong uniqueness for solutions of the Navier-Stokes equations with non-zero boundary conditions. It is inspired by an analogous approach for the stationary system due to Struwe.

\end{abstract}

\maketitle

\section{Introduction}
In the regularity theory of the three-dimensional non-stationary Navier-Stokes equations, so-called epsilon regularity theory remains the state-of-the-art. As well as being of independent interest, epsilon regularity at one scale (`one-scale epsilon regularity') has been a crucial tool for proving some of the best results in the field. See \cite{CKN}, \cite{Necas}, \cite{tsai} and \cite{ESS}, for example. Thus far, there have been three main methods for proving one-scale epsilon regularity results:
\begin{itemize}
\item direct iteration arguments (see \cite{CKN}),
\item compactness arguments (see \cite{lin}, \cite{ladyzhenskayaseregin}),
\item De Giorgi techniques (see \cite{vasseur}).
\end{itemize}
This short article is devoted to a new concise proof of a certain one-scale epsilon regularity criterion for the three-dimensional non-stationary Navier-Stokes equations away from boundaries. Theorem~\ref{theom.epreg} is based on slicing techniques and a comparison to a solution 
for which we have improved integrability. To the best of our knowledge, related arguments in the Navier-Stokes context were 
introduced by Struwe in \cite{struwe1988partial} for the five-dimensional stationary Navier-Stokes equations. Our paper is, in spirit, an analogue for the non-stationary 
Navier-Stokes equations. 

The following localized weak-strong uniqueness result is the keystone of our paper.

\begin{theorema}[first version of weak-strong uniqueness with boundary conditions]\label{theom.locWSU}
Let $\Omega\subset\R^3$ be a smooth bounded domain\footnote{We rely on linear results of \cite{FKS11}, which require $\partial\Omega$ to be of class $C^{2,1}$.\label{foot-smoothbdary}} and $T>0$. 
There exists $\bar\kappa=\bar\kappa_{\Omega,T}\in(0,\infty)$ and $C(\Omega,T)\in (0,\infty)$ such that the following holds.\footnote{See \eqref{e.condbarkappa} for an estimate of $\bar\kappa_{\Omega,T}$. One can take $C(\Omega,T):= K_{\Omega,T}(1+C_0C_2)$. Here $C_0$ is defined in \eqref{bilinearest}, $C_{2}$ is defined in \eqref{C2def} and $K_{\Omega,T}\in(0,\infty)$ is defined in \eqref{e.linestFLR}.} There exists a unique very weak solution 
\begin{equation}\label{e.intL4U}
U\in L^4(\Omega\times (0,T))
\end{equation}
to the Navier-Stokes equations on $\Omega\times(0,T)$ in the sense of Definition \ref{def.vws} with boundary data $a$ and initial data $b$ satisfying the integrability conditions
\begin{equation}\label{e.intassab}
a\in L^{4}(\partial\Omega\times (0,T)),\qquad b\in L^{4}(\Omega),
\end{equation}
the compatibility conditions
\begin{equation}\label{e.struc}
\int\limits_{\partial\Omega}a(\cdot,t)\cdot n=0,\qquad \int\limits_\Omega b\cdot\nabla q=0 \quad \mbox{for all}\quad q\in C^\infty(\R^3)
\end{equation}
and the smallness condition
\begin{equation}\label{e.smallnessab}
\kappa:=\|a\|_{L^{4}(\partial\Omega\times(0,T))}+\|b\|_{L^4(\Omega)}<\bar\kappa.
\end{equation}

Moreover, $U\in L^4(0,T;L^6(\Omega))$,
and
\begin{equation}\label{e.estUL4L6}
\|U\|_{L^4(0,T;L^6(\Omega))}\leq C(\Omega,T)\kappa.
\end{equation}
\end{theorema}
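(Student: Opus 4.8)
The plan is to reduce the problem to a fixed-point argument for the nonlinearity, built on top of the linear theory of \cite{FKS11}. First I would use the linear estimate \eqref{e.linestFLR} to construct the linear part $U_{\mathrm{lin}}$: the very weak solution of the \emph{inhomogeneous Stokes} system with boundary data $a$, initial data $b$, and no forcing. The compatibility conditions \eqref{e.struc} are exactly the solvability conditions this linear problem requires (zero net flux through $\partial\Om$ and solenoidality of $b$ in the very weak sense), and \eqref{e.linestFLR} yields
\begin{equation*}
\norm{U_{\mathrm{lin}}}_{L^4(0,T;L^6(\Om))}\les K_{\Omega,T}\,\kappa.
\end{equation*}
Writing a candidate solution as $U=U_{\mathrm{lin}}-\mathcal{S}(U\otimes U)$, where $\mathcal{S}(F)$ is the very weak solution of the Stokes system with forcing $-\div F$ and homogeneous initial and boundary data, turns existence into a fixed-point problem for $U\mapsto U_{\mathrm{lin}}-\mathcal{S}(U\otimes U)$ on a small ball in $X:=L^4(0,T;L^6(\Om))$.

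The crux is the bilinear estimate \eqref{bilinearest},
\begin{equation*}
\norm{\mathcal{S}(U\otimes V)}_{X}\le C_0\,\norm{U}_X\,\norm{V}_X.
\end{equation*}
I would derive it from Duhamel's formula using the Stokes-semigroup bounds of \cite{FKS11}: when $U,V\in L^4(0,T;L^6(\Om))$ one has $U\otimes V\in L^2(0,T;L^3(\Om))$, and the smoothing estimate $\norm{e^{(t-s)A}\mathbb{P}\div F}_{L^6(\Om)}\les (t-s)^{-3/4}\norm{F}_{L^3(\Om)}$ combined with the Hardy--Littlewood--Sobolev inequality returns the time integral to $L^4_t$, since the exponents balance as $\tfrac34+\tfrac12-1=\tfrac14$. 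With this in hand the contraction is standard: choosing $\bar\kappa$ so small that $4C_0K_{\Omega,T}\bar\kappa<1$, the map is a contraction on the ball of radius $2K_{\Omega,T}\kappa$ whenever \eqref{e.smallnessab} holds, producing a unique fixed point $U\in X$ together with the quantitative bound \eqref{e.estUL4L6}. The membership $U\in L^4(\Om\times(0,T))$ is then automatic: since $\Om$ is bounded, $L^6(\Om)\into L^4(\Om)$, hence $X\into L^4(\Om\times(0,T))$.

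It remains to promote this to uniqueness within the full class $L^4(\Om\times(0,T))$, which is the genuinely delicate point and where the weak--strong character of the statement surfaces. Given any very weak solution $V\in L^4(\Om\times(0,T))$ with the same data, I would first verify that $V$ satisfies the mild identity $V=U_{\mathrm{lin}}-\mathcal{S}(V\otimes V)$, so that the difference $w=U-V$ obeys $w=-\mathcal{S}(U\otimes w+w\otimes V)$. Since $V\otimes V\in L^2(\Om\times(0,T))$, the operator $\mathcal{S}$ places $V$ in the energy class and a short bootstrap through the bilinear estimate upgrades its integrability into $X$; the constructed $U$ already lies in $X$ with norm $\les\kappa$. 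One then closes a perturbative estimate of the schematic form $\norm{w}_X\le C_2\big(\norm{U}_X+\norm{V}_X\big)\norm{w}_X$, using the constant $C_2$ of \eqref{C2def}, and the smallness \eqref{e.smallnessab} forces the right-hand coefficient below $1$, whence $w=0$. I expect the main obstacle to lie precisely in this last step---calibrating the mixed space-time exponents so that the Stokes smoothing absorbs the difference $w$, and ensuring the bootstrap for a merely $L^4(\Om\times(0,T))$ very weak solution actually closes---rather than in the comparatively routine contraction for existence.
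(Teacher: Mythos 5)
Your existence argument coincides with the paper's Step 1 (a linear lift of the data via \cite{FKS11} plus a contraction in $X:=L^4(0,T;L^6(\Omega))$ built on the bilinear estimate \eqref{bilinearest}), and that part is sound. The gap is in the uniqueness step. You propose to upgrade an arbitrary very weak solution $V\in L^4(\Omega\times(0,T))$ into $X$ by a ``short bootstrap through the bilinear estimate'' and then close a perturbative estimate in $X$. This bootstrap does not close. From $V\otimes V\in L^2(\Omega\times(0,T))$ the Duhamel term lands only in the energy class $L^\infty_tL^2_x\cap L^2_tH^1_{0}$, which interpolates to $L^4_tL^3_x$, not $L^4_tL^6_x$; feeding this back in gives $V\otimes V\in L^2_tL^{3/2}_x$, and the semigroup exponent $\tfrac12+\tfrac32(\tfrac23-\tfrac16)=\tfrac54$ needed to reach $L^6_x$ from $L^{3/2}_x$ is non-integrable, so each iteration loses rather than gains spatial integrability. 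If such an upgrade were available, the ``weak--strong'' structure of the statement would be unnecessary. Moreover, even granting $V\in X$, your closing estimate $\|w\|_X\le C(\|U\|_X+\|V\|_X)\|w\|_X$ requires $\|V\|_X$ to be small, which is not a priori controlled by $\kappa$ for an arbitrary solution in the class.

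The paper circumvents this entirely by running the comparison in the energy space rather than in $X$: writing $U=\bar U+W$ with $\bar U$ the linear lift and $V$ the constructed strong perturbation, the difference $V-W$ is a very weak Stokes solution with zero data and forcing $\nabla\cdot F$, $F\in L^2(\Omega\times(0,T))$, hence (by the duality uniqueness of Lemma \ref{uniqueveryweak} and the parabolic regularity quoted in Remark \ref{energyequality}) lies in $C([0,T];L^2_\sigma)\cap L^2(0,T;W^{1,2}_{0,\sigma})$ and satisfies the energy equality. In the resulting trilinear term the weak solution enters only through a contribution that vanishes by antisymmetry, so the Gr\"onwall absorption needs critical integrability of the \emph{strong} solution $V+\bar U$ alone; the weak solution needs only enough integrability ($L^4$ in space-time) to make $F$ square integrable. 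This energy comparison is the step your proposal is missing, and without it uniqueness in the class $L^4(\Omega\times(0,T))$ is not established.
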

The critical integrability in~\eqref{e.estUL4L6} is the main practical outcome of the theorem.

Below, we give a second version of this localized uniqueness result, see Theorem \ref{theom.locWSUbis}, that can be directly used to prove the epsilon regularity result stated in Theorem \ref{theom.epreg} below.

\begin{remark}[structure result]\label{rem.struc}
We actually prove a strengthened version of the uniqueness result. Namely, we prove that any very weak solution $U$ in the sense of Definition \ref{def.vws} with data satisfying \eqref{e.intassab}, \eqref{e.struc} and the smallness condition \eqref{e.smallnessab} can be written as $U=\bar U+V$, where $\bar U$ is the unique very weak solution to the Stokes equation with boundary data $a$ and initial data $b$ and $V$ is the unique mild solution to the perturbed Navier-Stokes equations around $\bar U$ in $\Omega\times (0,T)$ with homogeneous boundary and initial data. We refer to Section \ref{sec.wsu} below for more details.
\end{remark}

\begin{remark}[smallness assumption \eqref{e.smallnessab}]
The smallness condition~\eqref{e.smallnessab} is required in Section~\ref{sec.wsu} (Step~1-b) below in order to construct a mild solution to the perturbed Navier-Stokes equations around the solution $\bar U$ to the linear Stokes equations lifting the boundary data~$a$ and the initial data~$b$.
\end{remark}
\begin{remark}[qualitative integrability assumption $L^{4}$]\label{rem.L4+}
The integrability condition \eqref{e.intL4U} on the solution is essential to prove that the perturbation in Section \ref{sec.wsu} (Step 2) has finite energy. The integrability condition \eqref{e.intassab} on the boundary data $a$ is assumed in order to apply the linear results of Farwig, Kozono and Sohr \cite{FKS11} (see Section \ref{sec.prelim} below). Notice that according to \cite{FKS11}, the boundary data can be taken in the larger space $L^4(0,T;W^{-\frac16,6}(\partial\Omega))$. However, we do not state such an improved result, because in the application to epsilon regularity that we have in mind, the data $a$ satisfies \eqref{e.intassab}.

\end{remark}

In the second version of the localized uniqueness stated below, the main change with respect to the previous theorem is in the conditions on the initial data $b$. Indeed the requirement \eqref{e.struc} is strong and imposes not only that $b$ is divergence-free, but also that $b\cdot n=0$ on $\partial\Omega$. This condition cannot be satisfied in general for data $b$ arising from slicing in the proof of the epsilon regularity result, Theorem \ref{theom.epreg} below; for details see Step 1 in Section \ref{sec.proof}. The assumptions in Theorem \ref{theom.locWSUbis}, contrary to those of Theorem \ref{theom.locWSU}, are immediately satisfied for the data stemming from the slicing in the proof of Theorem \ref{theom.epreg}. In order to fit into the framework of Theorem \ref{theom.epreg}, we take $\Omega=B_{r_0}$ for some $r_0\in (\frac58,\frac78)$. This specific choice is only for convenience. It is easy to extend the result to more general domains, the key point being that one has to assume that $b$ is defined and divergence-free on a larger domain than $\Omega$, so that it is possible to cut-off.

\begin{theoremabis}[second version of weak-strong uniqueness with boundary conditions]\label{theom.locWSUbis}
Let $\Omega=B_{r_0}$ for some $r_0\in (\frac58,\frac78)$ and $t_0\in(-1,-\frac34)$. There exists universal constants $\bar\kappa\in(0,\infty)$ and $C\in (0,\infty)$ such that the following holds.\footnote{See \eqref{e.condbarkappabis} for an estimate of $\bar\kappa$. One can take $C=K(1+C_0C_2$). Here $C_0$ is defined in \eqref{bilinearest}, $C_{2}$ is defined in \eqref{C2def} and $K\in (0,\infty)$ is a universal constant (see \eqref{e.linestFLRbis}).} There exists a unique very weak solution 
\begin{equation}\label{e.intL4U2.0}
U\in L^4(B_{r_0}\times(t_0,0))
\end{equation}
to the Navier-Stokes equations on $B_{r_0}\times(t_0,0)$ in the sense of Definition~\ref{def.vws} with boundary data $a$ and initial data $b$ satisfying the integrability conditions
\begin{equation}\label{e.intassab2.0}
a\in L^{4}(\partial B_{r_0}\times(t_0,0)),\qquad b\in L^{4}(B_1),\footnote{Notice here that $b$ is defined on a domain strictly larger than $B_{r_0}$ so that one can cut-off $b$.}
\end{equation}
the compatibility condition
\begin{equation}\label{e.strucbis}
\int\limits_{\partial B_{r_0}}a(\cdot,t)\cdot n=0,
\end{equation}
the incompressibility condition $\nabla\cdot b=0$ in $B_1$ in the sense of distributions 
and the smallness condition
\begin{equation}\label{e.smallnessab2.0}
\kappa:=\|a\|_{L^{4}(\partial B_{r_0}\times(t_0,0))}+\|b\|_{L^4(B_1)}<\bar\kappa.
\end{equation}

Moreover, $U\in L^4(t_0,0;L^6(B_{r_0}))$,
and
\begin{equation}\label{e.estUL4L62.0}
\|U\|_{L^4(t_0,0;L^6(B_{r_0}))}\leq C\kappa.
\end{equation}
\end{theoremabis}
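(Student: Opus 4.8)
The plan is to deduce Theorem~\ref{theom.locWSUbis} from Theorem~\ref{theom.locWSU} by replacing the initial datum with its solenoidal part. The only gap between the two sets of hypotheses is that here $b$ is merely divergence-free on $B_1$, whereas \eqref{e.struc} would additionally force $b\cdot n=0$ on $\p B_{r_0}$; the key observation is that this discrepancy is invisible to the very weak formulation. \emph{Step 1 (Helmholtz--Leray correction of the datum).} Restricting $b$ to $B_{r_0}$ and using that $B_{r_0}\subset\subset B_1$ is smooth, I write the Helmholtz decomposition $b=\bP b+\nb\phi$ on $B_{r_0}$, where $\bP$ denotes the projection onto solenoidal $L^4$ fields with zero normal trace; since $\nb\cdot b=0$ in $B_{r_0}$ the potential is harmonic, $\De\phi=0$, with $\p_n\phi=b\cdot n$ on $\p B_{r_0}$. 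Because $1<4<\I$ and $\p B_{r_0}$ is smooth, $\bP$ is bounded on $L^4$, so $\bP b\in L^4(B_{r_0})$ with $\|\bP b\|_{L^4(B_{r_0})}\le C_{r_0}\|b\|_{L^4(B_1)}$, and by construction $\bP b$ is solenoidal with vanishing normal trace, hence satisfies the strong compatibility condition \eqref{e.struc}.

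\emph{Step 2 (the datum enters only through $\bP b$).} In Definition~\ref{def.vws} the initial datum is felt solely through the pairing $\int_{B_{r_0}}b\cdot w(\cdot,t_0)$ against solenoidal test fields $w$ that vanish on $\p B_{r_0}$. For every such $w$ one has $\int_{B_{r_0}}\nb\phi\cdot w=-\int_{B_{r_0}}\phi\,(\nb\cdot w)+\int_{\p B_{r_0}}\phi\,(w\cdot n)=0$, whence $\int_{B_{r_0}}b\cdot w=\int_{B_{r_0}}\bP b\cdot w$. Therefore $U$ is a very weak solution on $B_{r_0}\times(t_0,0)$ with data $(a,b)$ if and only if it is one with data $(a,\bP b)$; existence, uniqueness, and the quantitative bound for $(a,b)$ are thus \emph{equivalent} to the corresponding statements for $(a,\bP b)$.

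\emph{Step 3 (invoking Theorem~\ref{theom.locWSU}).} After the harmless time shift $t\mapsto t-t_0$, the problem lives on $B_{r_0}\times(0,T)$ with $T=-t_0\in(\tfrac34,1)$, and the data $(a,\bP b)$ verify \eqref{e.intassab} and \eqref{e.struc} (the flux condition there being exactly \eqref{e.strucbis}). Setting $\tilde\kappa:=\|a\|_{L^4(\p B_{r_0}\times(t_0,0))}+\|\bP b\|_{L^4(B_{r_0})}$, Step~1 gives $\tilde\kappa\le\max(1,C_{r_0})\,\kappa$. Hence, choosing $\bar\kappa$ small enough that $\kappa<\bar\kappa$ forces $\tilde\kappa<\bar\kappa_{B_{r_0},T}$, Theorem~\ref{theom.locWSU} furnishes the unique very weak solution $U\in L^4$, the improved integrability $U\in L^4(t_0,0;L^6(B_{r_0}))$, and the bound $\|U\|_{L^4(t_0,0;L^6(B_{r_0}))}\le C(B_{r_0},T)\,\tilde\kappa\le C\kappa$, which is precisely \eqref{e.estUL4L62.0}.

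The step I expect to require the most care is the passage from the $\Omega$- and $T$-dependent constants of Theorem~\ref{theom.locWSU} to the \emph{universal} constants asserted here. One must verify that $\bar\kappa_{B_{r_0},T}$, $C(B_{r_0},T)$, the Helmholtz operator norm $C_{r_0}$, and the underlying linear constants $C_0,C_2,K$ (entering through \eqref{bilinearest}, \eqref{C2def}) are bounded, above and below as appropriate, uniformly over $r_0\in(\tfrac58,\tfrac78)$ and $T\in(\tfrac34,1)$. Since these parameter ranges stay away from the degenerate limits $r_0\to0$ and $T\to 0,\I$, this reduces to tracking the explicit dependence of the constants in the linear theory of \cite{FKS11}; this bookkeeping, rather than any genuinely new analytic difficulty, is the crux of obtaining the claimed universality.
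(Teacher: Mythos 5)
Your proof is correct, but it takes a genuinely different route from the paper's. The paper does not reduce Theorem~\ref{theom.locWSUbis} to Theorem~\ref{theom.locWSU} by modifying the datum $b$; instead it redoes Step~1-a of the fixed-point construction: it cuts off $b$ with $\varphi\in C^\infty_c(B_{15/16})$, uses the Bogovskii operator on the annulus $B_{15/16}\setminus B_{7/8}$ to produce a compactly supported divergence-free extension $E(b)$ with $E(b)=b$ on $B_{7/8}\supset B_{r_0}$, evolves $E(b)$ by the whole-space heat semigroup, and absorbs the resulting boundary trace (which has mean-zero normal flux but is not tangential) into the Farwig--Kozono--Sohr boundary datum via $\tilde a=a-\Gamma(\cdot-t_0)\star E(b)|_{\partial B_{r_0}}$, using the trace estimate \eqref{e.intbdaryextendedb} from \cite{FLR77}; this construction is precisely why the statement asks for $b$ to be defined and divergence-free on the strictly larger ball $B_1$. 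Your observation --- that Definition~\ref{def.vws} pairs the initial datum only against fields in $C^2_{0,\sigma}(\overline{B_{r_0}})$, so the gradient part of the Helmholtz decomposition is invisible and one may replace $b$ by $\mathbb{P}b\in L^4_\sigma(B_{r_0})$, which does satisfy \eqref{e.struc} --- is a clean shortcut that then quotes Theorem~\ref{theom.locWSU} as a black box. It even buys something the paper's version does not: the requirement that $b$ live on $B_1$ (and, in fact, the incompressibility of $b$) becomes superfluous, since only $b|_{B_{r_0}}\in L^4$ and the $L^4$-boundedness of the Helmholtz projection on a smooth bounded domain are used; the cost is essentially nothing beyond that projection bound. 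The one point you rightly flag --- uniformity of $\bar\kappa_{B_{r_0},T}$, $C(B_{r_0},T)$ and $C_{r_0}$ over $r_0\in(\frac58,\frac78)$ and $T=-t_0\in(\frac34,1)$ --- is equally left implicit in the paper (its footnotes concede that the domain dependence of the constants in \cite{FKS11} is not tracked), so your argument is at no disadvantage there; a scaling argument reducing $B_{r_0}$ to a fixed ball settles it in both approaches.
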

Again, the critical integrability is the main practical outcome of the theorem.

We now state the epsilon regularity result. Let $(U,P)$ be a finite-energy weak solution to the three-dimensional Navier-Stokes equations in $Q_1=B_1(0)\times(-1,0)$ i.e. 
\begin{align}
&\partial_tU-\Delta U+U\cdot\nabla U+\nabla P=0,\quad\nabla\cdot U=0\qquad \mbox{in}\quad Q_1,\nonumber\\
&\qquad\qquad\mbox{in the sense of distributions, and}\label{e.distribnse}\\
&\Bigg(\sup_{t\in(-1,0)}\int\limits_{B_1}|U(\cdot,t)|^2+\int\limits_{-1}^0\int\limits_{B_1}|\nabla U|^2\Bigg)^{\frac{1}{2}}\leq M
<\infty.\label{e.en}
\end{align}

\begin{theorema}[epsilon regularity]\label{theom.epreg} 
There exists $\bar\ep\in(0,1)$  
such that for any $M\in(0,\infty)$, for any finite-energy weak solution $U$ to the Navier-Stokes equations in the sense of \eqref{e.distribnse}-\eqref{e.en} belonging to $C^\infty(B_1\times(-1,T))$ for all $T\in(-1,0)$,\footnote{\label{foot.fts}This assumption is there to keep technicalities to a minimum. It makes our result applicable to rule out first-time singularities.} 
the following result holds. 
\begin{description}
\item[Qualitative statement] Assume that $U$ satisfies the smallness condition in Theorem \ref{theom.locWSUbis}, i.e. 
\begin{equation}\label{e.smallnessUbis}
\|U\|_{L^{4}(Q_1)}<\bar\ep.
\end{equation}
Then $U\in L^\infty(Q_{\frac14})$.
\item[Quantitative statement] Let $0<\ep<\bar\ep$. Assume that
\begin{equation}\label{e.HL4+}
\|U\|_{L^{4}(Q_1)}\leq\ep.
\end{equation}
Then $U\in L^\infty(Q_{\frac14})$ and in addition we have the quantitative estimate
\begin{equation}\label{e.concl}
\|U\|_{L^\infty(Q_{\frac14})}\leq P(\ep,M),
\end{equation}
where $P(\ep,M)$ is a positive polynomial in $0<\ep,\, M$.

\end{description}
\end{theorema}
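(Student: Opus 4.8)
The plan is to reduce the epsilon regularity statement to the localized weak-strong uniqueness result of Theorem~\ref{theom.locWSUbis} by a slicing argument, and then to upgrade the critical integrability it provides to an interior $L^\infty$ bound by a standard bootstrap. Concretely, from the hypothesis $\norm{U}_{L^4(Q_1)}<\bar\ep$ I would extract a good time slice and a good spatial sphere on which $U$ is small in $L^4$; these furnish the initial datum $b$ and the boundary datum $a$ for Theorem~\ref{theom.locWSUbis} on a cylinder $B_{r_0}\times(t_0,0)$. Since $U$ is itself a very weak solution on that cylinder with exactly this data, uniqueness forces $U$ to coincide with the solution produced by Theorem~\ref{theom.locWSUbis}, which enjoys the critical integrability $U\in L^4(t_0,0;L^6(B_{r_0}))$. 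Because $r_0>\tfrac58>\tfrac14$ and $t_0<-\tfrac34<-\tfrac14$, the cylinder $B_{r_0}\times(t_0,0)$ contains $Q_{\frac14}$ compactly in space and its time interval contains $(-\tfrac14,0)$, so a local regularity criterion applied on this cylinder yields $U\in L^\infty(Q_{\frac14})$.

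For the slicing I would choose, by averaging, a time $t_0\in(-1,-\tfrac34)$ with
\[
\int_{B_1}|U(\cdot,t_0)|^4\,dx\le 4\,\norm{U}_{L^4(Q_1)}^4,
\]
and set $b:=U(\cdot,t_0)$; this $b$ is defined on all of $B_1$ (strictly larger than $B_{r_0}$, which is what permits the cut-off required by Theorem~\ref{theom.locWSUbis}) and is divergence-free since $U$ is. Integrating $\int_{\partial B_r}|U|^4$ over $r\in(\tfrac58,\tfrac78)$ and using Fubini, I would select $r_0$ with
\[
\int_{t_0}^0\int_{\partial B_{r_0}}|U|^4\,dS\,dt\le 4\,\norm{U}_{L^4(Q_1)}^4,
\]
and set $a:=U|_{\partial B_{r_0}\times(t_0,0)}$, which is a genuine trace because $U$ is smooth up to (but not including) the final time. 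The compatibility condition~\eqref{e.strucbis} follows from the divergence theorem, $\int_{\partial B_{r_0}}a(\cdot,t)\cdot n=\int_{B_{r_0}}\nabla\cdot U=0$, and the incompressibility of $b$ is immediate. Both data are controlled by $\norm{U}_{L^4(Q_1)}$, so $\kappa\lesssim\norm{U}_{L^4(Q_1)}<\bar\ep$; choosing $\bar\ep$ small relative to $\bar\kappa$ enforces the smallness~\eqref{e.smallnessab2.0}.

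With the data in place I would verify that the restriction of $U$ to $B_{r_0}\times(t_0,0)$ is a very weak solution in the sense of Definition~\ref{def.vws} with data $(a,b)$ — a direct integration by parts on $B_{r_0}\times(t_0,T)$, where $U$ is smooth, followed by passing $T\upto 0$ using $U\in L^4$ — and then invoke Theorem~\ref{theom.locWSUbis} to obtain $U\in L^4(t_0,0;L^6(B_{r_0}))$ with $\norm{U}_{L^4(t_0,0;L^6(B_{r_0}))}\le C\kappa$. Since $2/4+3/6=1$, this is a critical Ladyzhenskaya--Prodi--Serrin integrability holding up to the final time, so the qualitative conclusion $U\in L^\infty(Q_{\frac14})$ follows from a local/interior version of the Serrin regularity criterion: $U\otimes U\in L^2_tL^3_x$ locally, so viewing the equation as a perturbed Stokes system and bootstrapping through interior parabolic regularity estimates raises the integrability in finitely many steps up to $L^\infty(Q_{\frac14})$. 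For the quantitative statement I would track constants through the slicing ($\kappa\lesssim\ep$) and through the finitely many bootstrap iterations, with the a priori energy bound $M$ entering the local estimates, yielding the polynomial $P(\ep,M)$.

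The main obstacle is not the slicing, which is routine averaging, but rather (i) checking cleanly that $U$ restricted to the cylinder is admissible as a very weak solution with the sliced data, so that the uniqueness in Theorem~\ref{theom.locWSUbis} genuinely applies to $U$ itself; and (ii) organizing the final bootstrap so that the dependence on $\ep$ and $M$ is truly polynomial rather than merely finite. The delicate point in (ii) is handling the nonlocal pressure under spatial cut-offs while keeping all constants explicit through the iteration.
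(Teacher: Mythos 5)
Your proposal follows essentially the same route as the paper: pigeonhole/Fubini slicing to produce the data $a=U|_{\partial B_{r_0}\times(t_0,0)}$ and $b=U(\cdot,t_0)$ with $\kappa\lesssim\ep$, verification that $U$ itself is an admissible very weak solution so that Theorem~\ref{theom.locWSUbis} applies and yields the critical bound $\|U\|_{L^4(t_0,0;L^6(B_{r_0}))}\le C\kappa$, and then a Lady\v{z}enskaja--Prodi--Serrin-type conclusion. The one substantive divergence is in the final step: you propose a velocity-based bootstrap through a perturbed Stokes system and correctly flag the nonlocal pressure under cut-offs as the delicate point, whereas the paper sidesteps exactly that difficulty by running the contraction mapping on the \emph{localized vorticity equation} (in the spirit of \cite{sereginnewLPS,BP18}), which is pressure-free, before bootstrapping as in \cite{serrin}; adopting that device would close the obstacle you identify in (ii) and make the polynomial dependence $P(\ep,M)$ straightforward to track.
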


This result is a mere corollary of Theorem \ref{theom.locWSUbis}.

\begin{remark}[smallness condition~\eqref{e.smallnessUbis}]
Ultimately, the reason for the space $L^4(Q_1)$ in Theorem~\ref{theom.epreg} is that the solution of the linear Stokes equation with boundary data in $L^{4}(\partial\Omega\times (0,T))$ and zero initial condition belongs to the critical space $L^{4}(0,T; L^{6}(\Omega))$ (see Theorem~\ref{theo.flrex}). Notably, the proof we present below apparently does not work in $L^{4-\varepsilon}(Q_1)$.
\end{remark}

\begin{remark}[nonlocality and pressure]
Theorem~\ref{theom.epreg} is in the spirit of some other epsilon regularity results that do not involve the pressure, such as \cite{wolf15} and \cite{kwon2021role}. 

\end{remark}

\subsection*{Outline of the paper}

In Section~\ref{sec.prelim}, we review the concept of very weak solution with $L^p$ initial and boundary data. In Section~\ref{sec.wsu}, we prove weak-strong uniqueness, Theorems~\ref{theom.locWSU} and \ref{theom.locWSUbis}. In Section~\ref{sec.proof}, we prove the epsilon regularity criterion, Theorem \ref{theom.epreg}.

\subsection*{Notations}

For $r>0$ and $p\in(1,\infty)$, we denote by $\mathbf A$ the Stokes operator realized on $L^p_\sigma(B_r)$. Notice that $\mathbf A=-\mathbb P\Delta_D$, where $\mathbb P$ is the Helmholtz-Leray projection and $\Delta_D$ is the realization of the Laplace operator under the Dirichlet boundary condition on $\partial B_r$. 
We also use the notation $(e^{-t\mathbf A})_{t\in(0,\infty)}$ for the Stokes semigroup on $B_{r}$. Notice that these notations do not involve explicitly the parameter $r$ in order to lighten the notation and because the parameter $r$ will be fixed. For further details concerning the Stokes operator and the Stokes semigroup, we refer to \cite[Chapter III.2 and IV.1]{SohrNSEbook} and \cite{Giga86}. For definitions of Sobolev spaces on $\partial\Omega$, we refer to \cite[Chapter I.3]{SohrNSEbook}. We define $C_{0,\sigma}^2(\bar{\Omega}):=\{f\in C^2(\bar{\Omega};\mathbb{R}^3): \textrm{div}\, f=0,\, f|_{\partial\Omega}=0\}.$

As is usual, the notation $C$ denotes a numerical constant possibly depending on parameters that we do not track. This constant may change from line to line. When a constant depends on parameters that we track, we denote this by $C_{\alpha,\beta,\ldots}$.

\section{Preliminaries}
\label{sec.prelim}

Let $\Omega\subset\R^3$ be a smooth bounded domain.\footnote{Below, we apply the results for the fixed smooth domain $\Omega=B_{r_0}$. Hence, we pay no attention here to how the constants in the estimates will depend quantitatively on the regularity of the domain. This dependence is not tracked in \cite{FKS11}. See also Footnote \ref{foot-smoothbdary}.}
Let $-\infty<T_1<T_2<\infty$. Let $a\in L^1(\partial\Omega\times (T_1,T_2);\R^3)$, $b\in L^1(\Omega;\R^3)$ and $F\in L^1(\Omega\times(T_1,T_2);\R^{3\times 3})$ satisfying the compatibility conditions \eqref{e.struc}. 

We consider the following Stokes problem:
\begin{align}\label{e.stovws}
\begin{split}
&\partial_t U-\Delta U+\nabla P=\nabla\cdot F,\quad\nabla\cdot U=0\qquad \mbox{in}\quad \Omega\times(T_1,T_2),\\
&\bar U=a\qquad \mbox{on}\quad\partial \Omega\times(T_1,T_2),\\
&\bar U(\cdot,T_1)=b.
\end{split}
\end{align}
This problem includes in particular the standard Navier-Stokes problem taking $F=-U\otimes U$.

\begin{definition}[very weak solution]
\label{def.vws}
For $a,\, b$ and $F$ given as above,  
we say that $U\in L^1(\Omega\times (T_1,T_2))$ is a very weak solution of \eqref{e.stovws} if for all $\Phi\in C^{1}_{0}([T_1,T_2); C_{0,\sigma}^2(\bar{\Omega})$ and for all $q\in C^\infty(\overline\Omega\times[T_1,T_2];\R)$, we have 
\begin{align}\label{e.vwf}
\begin{split}
&-\int\limits_{T_1}^{T_2}\int\limits_{\Omega}U\cdot(\partial_t\Phi+\Delta\Phi+\nabla q)=-\int\limits_{T_1}^{T_2}\int\limits_{\Omega}F:\nabla\Phi\\
&\qquad\qquad+\int\limits_\Omega b\cdot\Phi(\cdot,T_1)-\int\limits_{T_1}^{T_2}\int\limits_{\partial\Omega}a\cdot(\nabla\Phi\cdot n)-\int\limits_{T_1}^{T_2}\int\limits_{\partial\Omega}(a\cdot n)q.
\end{split}
\end{align}
\end{definition}

We state here an existence and uniqueness result of very weak solutions to the Stokes system. Results in this direction were established in \cite{farwiggaldisohr}, though the version we use below is from \cite{FKS11}.

\begin{theorem}[very weak solutions for the Stokes problem, {\cite[Lemma 1.2]{FKS11}}]\label{theo.flrex}
Let $4\leq s, q<\infty$ such that $\frac2s+\frac3q=1$, and let $r:=\frac23q$. Let $a\in L^s(T_1,T_2;L^r(\Omega))$ and $F=b=0$.\\
Assume that $a$ satisfies the compatibility condition \eqref{e.struc}.\\ 

Then, there exists a unique very weak solution $U\in L^s(T_1,T_2;L^q(\Omega))$ to the Stokes problem \eqref{e.stovws} in the sense of Definition \ref{def.vws}. Morever,
\begin{equation}\label{e.estUab}
\|U\|_{L^s(T_1,T_2;L^q(\Omega))}\leq C_{\Omega,T_1,T_2,q}\|a\|_{L^s(T_1,T_2;L^r(\Omega))}.
\end{equation}
\end{theorem}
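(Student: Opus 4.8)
My plan is to construct $U$ by transposition (the duality method for very weak solutions), testing against the time-reversed Stokes problem and representing the resulting bounded functional through the reflexivity of $L^s(T_1,T_2;L^q(\Omega))$. Write $s',q',r'$ for the conjugate exponents; since $4\le s,q<\infty$ we have $s',q'\in(1,\tfrac43]$, and the identity $r=\tfrac23 q$ will be used to make the spatial exponents close. For a solenoidal datum $g\in L^{s'}(T_1,T_2;L^{q'}(\Omega))$ I first solve the adjoint Stokes problem
\[
-\partial_t\Phi-\Delta\Phi+\nabla\pi=g,\quad \nabla\cdot\Phi=0 \ \text{ in }\ \Omega\times(T_1,T_2),\quad \Phi|_{\partial\Omega}=0,\quad \Phi(\cdot,T_2)=0 .
\]
After the time reversal $t\mapsto T_1+T_2-t$ this is the forward Cauchy problem $\partial_\tau\Phi+\mathbf{A}\Phi=g$ for the Stokes operator $\mathbf{A}=-\mathbb{P}\Delta$, which is well posed and enjoys maximal $L^{s'}$--$L^{q'}$ regularity (see \cite{Giga86} and \cite[Ch.~III--IV]{SohrNSEbook}):
\[
\norm{\partial_t\Phi}_{L^{s'}(L^{q'})}+\norm{\Phi}_{L^{s'}(W^{2,q'})}+\norm{\nabla\pi}_{L^{s'}(L^{q'})}\les \norm{g}_{L^{s'}(L^{q'})}.
\]
The linear map $g\mapsto(\Phi,\pi)$ being at hand, I would define $U$ by the transposition identity $\int_{T_1}^{T_2}\!\int_\Omega U\cdot g=L(g)$, where $L(g)$ is the boundary functional appearing on the right-hand side of \eqref{e.vwf} with $F=b=0$, namely $L(g):=-\int\!\int_{\partial\Omega}a\cdot(\nabla\Phi\cdot n)-\int\!\int_{\partial\Omega}(a\cdot n)\,\pi$, extended to general $g$ by Helmholtz decomposition, the gradient part being dictated by the prescribed normal trace $a\cdot n$.

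The heart of the proof is the boundedness of $L$ on $L^{s'}(L^{q'})$ with norm $\les\norm{a}_{L^s(L^r(\partial\Omega))}$. Since $\Phi\in L^{s'}(W^{2,q'}(\Omega))$, the trace theorem places $\nabla\Phi\cdot n$ in $L^{s'}(W^{1-1/q',q'}(\partial\Omega))$; as $q'\le\tfrac43<3$, the Sobolev embedding on the two-dimensional boundary gives $W^{1-1/q',q'}(\partial\Omega)\hookrightarrow L^{\tilde r}(\partial\Omega)$ with $\tfrac1{\tilde r}=\tfrac1{q'}-\tfrac12\bigl(1-\tfrac1{q'}\bigr)=\tfrac{2q-3}{2q}$, which is precisely $\tfrac1{r'}$ because $r=\tfrac23 q$. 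Hence, by Hölder in space and time (the temporal exponents $s,s'$ being conjugate),
\[
\Big|\int_{T_1}^{T_2}\!\int_{\partial\Omega}a\cdot(\nabla\Phi\cdot n)\Big|\les \norm{a}_{L^s(L^r)}\norm{\nabla\Phi\cdot n}_{L^{s'}(L^{r'})}\les \norm{a}_{L^s(L^r)}\norm{g}_{L^{s'}(L^{q'})}.
\]
The pressure term is estimated identically, the trace of $\pi$ lying in the same space $L^{s'}(L^{r'}(\partial\Omega))$; here the compatibility condition $\int_{\partial\Omega}a\cdot n=0$ in \eqref{e.struc} is exactly what makes $\int_{\partial\Omega}(a\cdot n)\pi$ insensitive to the additive (time-dependent) constant in the only-up-to-constants-defined pressure $\pi$.

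With $L$ bounded, reflexivity $(L^{s'}(L^{q'}))^{*}=L^s(L^q)$ yields a unique representative $U\in L^s(L^q)$ satisfying \eqref{e.estUab}. To check that $U$ is a very weak solution in the sense of Definition~\ref{def.vws}, I would run the construction in reverse: an admissible (hence solenoidal) test field $\Phi$ with test pressure determines, via its Helmholtz splitting, a solenoidal $g$ for which the adjoint solution is $\Phi$ itself, and integration by parts then identifies $\int\!\int U\cdot g$ with the right-hand side of \eqref{e.vwf}, the gradient contribution being absorbed by the normal-trace identity $\int\!\int U\cdot\nabla\rho=\int\!\int(a\cdot n)\rho$. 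Uniqueness follows from the same duality: a very weak solution with vanishing data satisfies $\int\!\int U\cdot g=0$ for every smooth solenoidal $g$ (its adjoint solution is an admissible test function by parabolic regularity), while testing against pure gradients forces the weak divergence-free and vanishing normal-trace conditions; a density argument then gives $U=0$.

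The step I expect to be the main obstacle is the sharp trace estimate, namely verifying that the normal-derivative trace of the maximal-regularity solution lands exactly in the dual space $L^{s'}(L^{r'}(\partial\Omega))$, so that the Sobolev embedding on $\partial\Omega$ and the relation $r=\tfrac23 q$ close the exponents simultaneously, together with the careful handling of the pressure trace so that \eqref{e.struc} removes the constant ambiguity. The maximal $L^{s'}$--$L^{q'}$ regularity of the Stokes operator is the heavy analytic input, which I would quote from \cite{Giga86,SohrNSEbook} (and \cite{farwiggaldisohr,FKS11}) rather than reprove.
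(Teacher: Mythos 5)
Your proposal is correct in outline, but it follows a genuinely different (much longer) route than the paper, which does not reprove this statement at all: the paper quotes \cite[Lemma 1.2]{FKS11}, where existence, uniqueness and the estimate are established for boundary data in the larger class $L^s(T_1,T_2;W^{-\frac1q,q}(\partial\Omega))$, and the only thing actually verified in the text is the embedding $L^r(\partial\Omega)\into W^{-\frac1q,q}(\partial\Omega)$, obtained by dualizing the Sobolev embedding $W^{1-\frac{1}{q'},q'}(\partial\Omega)\into L^{r'}(\partial\Omega)$ on the two-dimensional boundary. That embedding is exactly the exponent computation at the heart of your trace estimate (your identity $\frac1{\tilde r}=\frac1{q'}-\frac12(1-\frac1{q'})=\frac1{r'}$, which closes precisely because $r=\frac23 q$), so in effect you have unfolded the black box: your transposition scheme --- adjoint Stokes problem with maximal $L^{s'}$--$L^{q'}$ regularity, boundedness of the boundary functional through the normal-derivative and pressure traces, representation by reflexivity, and duality for uniqueness --- is essentially the internal proof of the cited lemma specialized to $L^r$ boundary data. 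What your version buys is self-containedness and transparency about where the compatibility condition $\int_{\partial\Omega}a\cdot n=0$ enters (it removes the additive time-dependent constant in the adjoint pressure); what it costs is that the heavy inputs are only sketched, and if you want the argument to stand alone you must supply them: maximal regularity for the Stokes operator on $\Omega$, the normalized pressure bound $\norm{\pi}_{L^{s'}(W^{1,q'}(\Omega))}\les\norm{g}_{L^{s'}(L^{q'}(\Omega))}$ that makes the second boundary term controllable, the extension of the functional to non-solenoidal $g$ via the Helmholtz decomposition together with the elliptic estimate for the gradient part, and the verification that the transposition solution satisfies the formulation of Definition~\ref{def.vws} for \emph{every} admissible test pair $(\Phi,q)$. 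None of these is a gap in the sense of a wrong step, but each is a place where the paper deliberately leans on \cite{FKS11} instead.
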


Notice that our boundary data $a$ is slightly less general (but general enough for our purposes) than the data considered in \cite[Lemma 1.2]{FKS11}. It follows that any term in the very weak formulation of the equation \eqref{e.vwf} makes sense in a classical integral sense and duality brackets are not needed. The statement of Theorem \ref{theo.flrex} follows directly from \cite[Lemma 1.2]{FKS11}. Indeed, $W^{-\frac1q,q}(\partial\Omega)$ is the dual space of $W^{1-\frac1{q'},q'}(\partial\Omega)$ with $\frac1q+\frac1{q'}=1$, and by Sobolev embedding \cite[Theorem 4.1.3]{Droniou-sobolev}, $W^{1-\frac1{q'},q'}(\partial\Omega)$ embeds into $L^{r'}(\partial\Omega)$ for $r=\frac23q$ and $\frac1r+\frac1{r'}=1$. Therefore, $a\in L^s(T_1,T_2;L^r(\partial\Omega))$ embeds into $L^s(T_1,T_2;W^{-\frac1q,q}(\partial\Omega))$. 

\begin{remark}[non-zero initial data]
We handle non-zero initial data $b$ as a separate issue. Indeed this point is more classical than the case of rough boundary data which is treated in the result above. For non-zero initial data, we rely on the Stokes semigroup estimates in Lebesgue spaces, see for instance \cite{Giga86}.
\end{remark}

\begin{remark}[on alternative linear results with rough boundary data]
It is also possible to rely on the results of Fabes, Lewis and Rivi\`ere for boundary value problems with $L^p$ data obtained in \cite{Lewis72} for the half-space and in \cite{FLR77-pot,FLR77} for bounded smooth domains. However, due to an additional integrability condition on $a\cdot n$ on $\partial\Omega$ in \cite[Theorem (IV.3.3), page 643]{FLR77}, these results require to work in $L^{4+\ep}$, $\ep>0$, rather than $L^4$. Moreover, notice that there is a typo in the statement of \cite[Theorem (IV.3.1)]{FLR77}. The space for $a\cdot n$ is $L^{\bar q}_tL^{\frac23\bar p}_x$, not $L^{\bar q}_tL^{\frac32\bar p}_x$ as written in \cite[Theorem IV.3.1]{FLR77}. Indeed, to get the estimate of the term involving the normal data, namely $\nabla H$, one relies on Theorem IV.2.3.
\end{remark}

\begin{lemma}[Uniqueness of square integrable very weak solutions]\label{uniqueveryweak}
Let $U\in L^2(T_1,T_2;L^2(\Omega))$ be a very weak solution to the Stokes problem \eqref{e.stovws}, in the sense of Definition \ref{def.vws}, with $F=a=b=0$. Then $U\equiv 0$ on $\Omega\times (T_1,T_2)$. 
\end{lemma}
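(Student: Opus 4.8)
The plan is to prove Lemma~\ref{uniqueveryweak} by a duality argument: since $U$ is only assumed to solve the homogeneous Stokes system in the \emph{very weak} sense of \eqref{e.vwf}, we cannot directly test against $U$ itself. Instead, I would feed a well-chosen test function $\Phi$ into the very weak formulation, where $\Phi$ solves a suitable adjoint (backward) Stokes problem with a prescribed right-hand side, and then exploit the freedom in that right-hand side to conclude that $U$ vanishes.

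Concretely, since $F=a=b=0$, the identity \eqref{e.vwf} reduces to
\begin{equation}\label{e.dualreduce}
\int\limits_{T_1}^{T_2}\int\limits_{\Omega}U\cdot(\partial_t\Phi+\Delta\Phi+\nabla q)=0
\end{equation}
for all admissible $\Phi\in C^1_0([T_1,T_2);C^2_{0,\sigma}(\bar\Omega))$ and all $q\in C^\infty(\bar\Omega\times[T_1,T_2])$. First I would fix an arbitrary smooth divergence-free test vector field $g$, compactly supported in $\Omega\times(T_1,T_2)$, and construct $\Phi$ as the solution of the terminal-value adjoint Stokes problem $\partial_t\Phi+\Delta\Phi-\nabla q=-\mathbb P g$ (equivalently $-\partial_t\Phi=\Delta\Phi-\nabla q+\mathbb P g$ read backward in time) with $\Phi(\cdot,T_2)=0$ and $\Phi|_{\partial\Omega}=0$. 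Running the Stokes semigroup backward from the terminal time $T_2$ produces such a $\Phi$, and because $g$ is smooth and compactly supported in time the solution lies in the required class $C^1_0([T_1,T_2);C^2_{0,\sigma}(\bar\Omega))$, with the pressure $q$ smooth up to the boundary. Substituting this $\Phi$ into \eqref{e.dualreduce}, the gradient term $\nabla q$ is annihilated against the (distributionally) divergence-free $U$, and we are left with $\int_{T_1}^{T_2}\int_\Omega U\cdot \mathbb P g=0$; since $g$ was an arbitrary smooth compactly supported divergence-free field and $U$ is itself divergence-free, this forces $U\equiv 0$.

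The main obstacle is \emph{regularity and admissibility of the constructed test function}, not the formal cancellation. I must verify that the backward Stokes solution $\Phi$ genuinely belongs to $C^1_0([T_1,T_2);C^2_{0,\sigma}(\bar\Omega))$, so that it is a legitimate competitor in Definition~\ref{def.vws}; this is where the smoothing properties of the Stokes semigroup $(e^{-t\mathbf A})_{t}$ and the smoothness of $\partial\Omega$ (Footnote~\ref{foot-smoothbdary}) are used, and where the compact support of $g$ in time guarantees that $\Phi$ vanishes near $t=T_2$ in the sense required for $C^1_0([T_1,T_2);\cdot)$. A secondary technical point is justifying that $U\in L^2(T_1,T_2;L^2(\Omega))$ is divergence-free in the distributional sense so that the pressure term integrates to zero; this follows by testing \eqref{e.vwf} against fields of the form $\nabla q$ alone, or is already encoded in the very weak formulation through the arbitrariness of $q$. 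Once these integrability and density points are settled, the conclusion $U\equiv 0$ is immediate.
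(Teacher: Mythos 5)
Your proposal follows essentially the same route as the paper: a duality argument in which one solves a backward (adjoint) Stokes problem with smooth forcing compactly supported in time, checks via parabolic regularity and support propagation that the resulting $\Phi$ (and associated pressure) is an admissible test pair for Definition \ref{def.vws}, and then reads off that $U$ annihilates the forcing. The one place where your write-up differs from the paper, and where as stated it has a gap, is the final inference. You restrict the forcing to divergence-free compactly supported $g$ and conclude from $\int_{T_1}^{T_2}\int_\Omega U\cdot g=0$ together with ``$U$ is divergence-free'' that $U\equiv 0$. That implication is false as stated: any field of the form $U=\nabla h$ with $h$ harmonic in $\Omega$ is distributionally divergence-free and orthogonal to every compactly supported divergence-free field. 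What you actually need is that $U(\cdot,t)$ lies in $L^2_\sigma(\Omega)$, i.e. is divergence-free \emph{with vanishing normal trace}, so that orthogonality to $C^\infty_{c,\sigma}(\Omega)$ (dense in $L^2_\sigma(\Omega)$) forces $U(\cdot,t)=0$. This normal-trace information is available --- it comes from testing \eqref{e.vwf} with $\Phi=0$ and arbitrary $q\in C^\infty(\bar\Omega\times[T_1,T_2])$ \emph{not} compactly supported in space, which with $a=0$ gives $\int\int U\cdot\nabla q=0$ and hence $U(\cdot,t)\in L^2_\sigma(\Omega)$ for a.e.\ $t$ --- but your proposal invokes divergence-freeness only to kill the pressure term, not to justify this last step. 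The paper sidesteps the issue entirely by taking the forcing $f\in C^\infty_0(\Omega\times(T_1,T_2);\R^3)$ \emph{arbitrary} (not divergence-free) and letting the adjoint pressure, which is itself part of the admissible test data in \eqref{e.vwf}, absorb the gradient part; the formulation then yields $\int\int U\cdot f=0$ for all such $f$, which gives $U\equiv0$ with no Helmholtz-decomposition or density step. Your argument is repairable with these standard facts, but as written the concluding step does not follow from the stated hypotheses.
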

\begin{proof}
The proof relies on classical duality arguments, which we include for completeness. 
Without loss of generality, let $T_1=-1$ and $T_2=0.$ Let $f\in C^{\infty}_{0}(\Omega\times (-1,0);\mathbb{R}^3)$ be arbitrary. Define $\tilde{f}(x,t)=f(x,-t)$ and let $\tilde{\Phi}$ solve
$$\partial_{t}\tilde{\Phi}-\Delta \tilde{\Phi}+\nabla\tilde{q}=\tilde{f}\quad\textrm{and}\quad\textrm{div}\,\tilde{\Phi}=0\quad\textrm{in}\quad \Omega\times (0,\infty),\quad \tilde{\Phi}|_{\partial{\Omega}}=0,\quad \tilde{\Phi}(\cdot,0)=0. $$
From \cite[Theorem 2.7.3, IV]{SohrNSEbook}, there is a classical solution to the above linear problem satisfying $\tilde{\Phi},\tilde{q}\in C^{\infty}(\overline{\Omega\times (\delta,2)})$ for all $\delta\in (0,2)$. Furthermore, since $\tilde{f}$ is supported in time away from zero we can apply \cite[Lemma 2.4.2, Chapter IV]{SohrNSEbook} to infer that $\tilde{\Phi}$ is supported in time away from zero.

 We define
$$\Phi(x,t)= -\tilde{\Phi}(x,-t)\quad\textrm{and}\quad q(x,t)=\tilde{q}(x,-t).$$
Inserting $\Phi$ and $q$ into \eqref{e.vwf} (where $F$, $a$ and $b$ are all zero) gives

$$\int\limits_{-1}^{0}\int\limits_{\Omega} U\cdot f dxds=0\qquad\forall f\in C^{\infty}_{0}(\Omega\times (-1,0);\mathbb{R}^3). $$
 This implies the desired conclusion.
\end{proof} 
\begin{remark}[Very weak solutions and the energy equality]\label{energyequality} 
Let $U\in L^2(T_1,T_2;L^2(\Omega))$ be a very weak solution to the Stokes problem \eqref{e.stovws} in the sense of Definition \ref{def.vws}, with $a=b=0$ and $F\in L^{2}(\Omega\times(T_1,T_2);\R^{3\times 3})$. Applying \cite[Theorem 2.3.1 and Theorem 2.4.1, IV]{SohrNSEbook}, together with Lemma \ref{uniqueveryweak}, gives that
$U\in C([T_1,T_2]; L^{2}_{\sigma}(\Omega))\cap L^{2}(T_1,T_2; W^{1,2}_{0,\sigma}(\Omega))$ with
$$\|U(\cdot,t)\|_{L^{2}(\Omega)}^{2}+2\int\limits_{T_1}^{t}\int\limits_{\Omega} |\nabla U|^2 dxds=-2\int\limits_{T_1}^{t}\int\limits_{\Omega}F:\nabla U dxds\quad\forall t\in [T_1,T_2]. $$
This can also be used to show that very weak solutions to the Navier-Stokes equations (with $a=0$) satisfy the energy equality. For results in this direction, see \cite{Galdi2019}.
\end{remark}

\section{Proof of the localized weak-strong uniqueness results}
\label{sec.wsu}

\subsection{Proof of Theorem \ref{theom.locWSU}}\ 
\label{subsec.wsu}

\medskip

\noindent{\bf Step 1: existence of a `strong' solution to the Navier-Stokes system\footnote{After completion of this paper, we became aware that in \cite{farwiggaldisohr} the existence of a `strong solution' with small non-zero boundary data is also shown. To make this paper self-contained, we include such arguments in Step 1 below.} in $L^4(0,T;L^6(\Omega))$.} 

\noindent\underline{Step 1-a: linear problem.}\\
Our goal is to lift the boundary data $a$ and the initial data $b$ satisfying \eqref{e.intassab} and \eqref{e.struc} by constructing a very weak solution $\bar U$ to the Stokes problem \eqref{e.stovws} in $\Omega\times (0,T)$ with source $F=0$. By Theorem \ref{theo.flrex} of Farwig, Kozono and Sohr, with $s=r=4$ and $q=6$, there exists a unique very weak solution $U_a\in L^4(0,T;L^6(\Omega))$ in the sense of Definition \ref{def.vws} to the Stokes system with $F=b=0$. Hence we construct $\bar U$ the unique very weak solution in the sense of Definition \ref{def.vws} to the Stokes system with $F=0$ as follows:
\begin{equation*}
\bar U=U_a+e^{-t\mathbf A}b.
\end{equation*}
Thanks to the semigroup estimates of \cite{Giga86}, we have for $t\in(0,\infty)$,
\begin{equation*}
\|e^{-t\mathbf A}b\|_{L^6(\Omega)}\leq C_{\Omega}t^{-\frac18}\|b\|_{L^4(\Omega)}.
\end{equation*}
Hence,
\begin{equation}\label{e.linestFLR}
\|\bar U\|_{L^4(0,T;L^6(\Omega))}\leq K_{\Omega,T}\big(\|a\|_{L^4(\partial\Omega\times (t_0,0))}+\|b\|_{L^4(\Omega)}\big),
\end{equation}
for $K_{\Omega,T}\in (0,\infty)$.

\medskip
\noindent\underline{Step 1-b: existence of a unique strong solution $V$ to the perturbed system.}\\
In this step, we construct a mild solution to the perturbed Navier-Stokes system
\begin{align}\label{e.perNS}
\begin{split}
\partial_t V-\Delta V+\nabla Q&=-\nabla\cdot(V\otimes V)-\nabla\cdot(V\otimes \bar U)-\nabla\cdot(\bar U\otimes V)-\nabla\cdot(\bar U\otimes \bar U),\\
\nabla\cdot V&=0\qquad\qquad\qquad\qquad\qquad\qquad \mbox{in}\quad \Omega\times (0,T),\\
V&=0\qquad \mbox{on}\quad\partial\Omega\times (0,T),\\
V(\cdot,0)&=0\qquad \mbox{on}\quad \Omega.
\end{split}
\end{align}
for a small drift $\bar U$. 
We show that the Duhamel equation associated to \eqref{e.perNS}
\begin{equation}\label{e.duhamel}
V(\cdot,t)=-\int\limits_{0}^te^{-(t-s)\mathbf A}\left(\nabla\cdot(V\otimes V)+\nabla\cdot(V\otimes \bar U)+\nabla\cdot(\bar U\otimes V)+\nabla\cdot(\bar U\otimes \bar U)\right)\, ds
\end{equation}
has a unique fixed point in the critical space $L^{4}(0,T;L^{6}(\Omega))$. 

For $t\in (0,T)$, define
$$B(D,E)(\cdot,t):= -\int\limits_{0}^te^{-(t-s)\mathbf A}\nabla\cdot(D\otimes E)\, ds .$$

Using \cite[Proposition 20]{hiebersaal}, we see that 
\begin{equation*}
\left\|e^{-(t-s)\mathbf A}\nabla\cdot(D\otimes E)\right\|_{L^{6}(\Omega)}\leq \frac{C}{(t-s)^{\frac34}}\|D(\cdot,t)\|_{L^{6}(\Omega)}\|E(\cdot,t)\|_{L^{6}(\Omega)},
\end{equation*}
and hence, by Hardy-Littlewood-Sobolev's theorem \cite[Theorem 7.25]{GiaMarbook}, 
\begin{equation}\label{bilinearest}
\left\|B(D,E)\right\|_{L^{4}(0,T;L^{6}(\Omega))}\leq C_0\|D\|_{L^{4}(0,T;L^{6}(\Omega))}\|E\|_{L^{4}(0,T;L^{6}(\Omega))}.
\end{equation}
Define the linear operator $$L(D)(\cdot,t):=-\int\limits_{0}^te^{-(t-s)\mathbf A}\nabla\cdot(D\otimes \bar U+\bar U\otimes D)\, ds. $$
Then by the same reasoning as above
\begin{equation}\label{linearest}
\left\|L(D)\right\|_{L^{4}(0,T;L^{6}(\Omega))}\leq C_1\|D\|_{L^{4}(0,T;L^{6}(\Omega))}\|\bar U\|_{L^{4}(0,T;L^{6}(\Omega))},
\end{equation}
and for the source term quadratic in $\bar U$,
\begin{equation}\label{sourceest}
\left\|B(\bar{U},\bar{U})\right\|_{L^{4}(0,T;L^{6}(\Omega))}\leq C_0\|\bar U\|_{L^{4}(0,T;L^{6}(\Omega))}^2.
\end{equation}

Let \begin{equation}\label{C2def}
C_{2}:=\min\Big(\frac{1}{C_0}, \frac{1}{C_1}\Big).
\end{equation} 

Using \eqref{bilinearest}-\eqref{sourceest}, we can apply \cite[Lemma 4.1]{GIP}. This gives the existence of a fixed point/strong solution $V\in L^{4}(0,T;L^{6}(\Omega))$ provided that \begin{equation}\label{e.barUsmall}
\|\bar U\|_{L^{4}(0,T;L^{6}(\Omega))}<\frac{C_2}{4}.
\end{equation}
Moreover, $$\|V\|_{L^{4}(0,T;L^{6}(\Omega))}\leq 4C_0\|\bar{U}\|^2_{L^{4}(0,T;L^{6}(\Omega))}.$$

In view of the linear estimate \eqref{e.linestFLR}, this is achieved whenever 
\begin{align*}
\|\bar U\|_{L^{4}(0,T;L^{6}(\Omega))}\leq\ &K_{\Omega,T}\big(\|a\|_{L^4(0,T;L^4(\Omega))}+\|b\|_{L^4(\Omega)}\big)\\
=\ &K_{\Omega,T}\kappa\\
<\ &K_{\Omega,T}\bar\kappa=\frac{C_2}{4},
\end{align*}
i.e.
\begin{equation}\label{e.condbarkappa}
\bar\kappa:=\frac{C_2}{4K_{\Omega,T}}.
\end{equation}
Therefore, there is a strong solution such that
\begin{equation}\label{e.estV}
\|V\|_{L^4(0,T;L^6(\Omega))}<4C_0K_{\Omega,T}^2\kappa^2=C_0 C_2 K_{\Omega, T}\kappa.
\end{equation}
The fact that $V$ is the only strong solution in $L^{4}(0,t; L^{6}(\Omega))$ follows from \eqref{bilinearest} and arguments in \cite[Theorem (3.3)]{FJR}.
\medskip

\noindent{\bf Step 2: weak-strong uniqueness result.}

Let $W:=U-\bar U\in L^{4}(\Omega\times (0,T))$. Let $V\in L^{4}(0,T;L^{6}(\Omega))$  be the strong solution constructed in Step 1-b above. Then $V-W$ is a very weak solution to the Stokes system with $a=b=0$ and $F:=U\otimes U-(V+\bar{U})\otimes (V+\bar{U})\in L^{2}(\Omega\times (0,T))$. Applying Remark \ref{energyequality} gives that $V-W$ has finite energy on $\Omega\times (0,T)$ with zero initial data and satisfies the energy equality
\begin{multline*}
\frac12\int\limits_{\Omega}|V-W|^2(x,t)dx+\int\limits_{0}^t\int\limits_{\Omega}|\nabla(V-W)|^2 dxds\\= \int\limits_{0}^{t}\int\limits_{\Omega}(V-W)\otimes(V+\bar U):\nabla (V-W)dxds\quad\forall t\in [0,T).
\end{multline*}
For $t\in [0,T)$, we define
\begin{align*}
\mathcal E(t):= \sup_{s\in [0,t]} \frac{1}{2} \int\limits_{\Omega}|V-W|^2(x,s) dx + \int\limits_{0}^t\int\limits_{\Omega} |\nabla(V-W)|^2(x,s) dxds.
\end{align*}
Using the above energy equality (combined with H\"{o}lder's inequality, interpolation of Lebesgue spaces, the Sobolev embedding theorem and Young's inequality) yields that for $t\in [0,T)$ and some positive universal constant $C$, 
$$\mathcal{E}(t)\leq C\mathcal{E}(t)\int\limits_{0}^{t}\|(V+\bar{U})(\cdot,s)\|_{L^{6}(\Omega)}^4 ds.$$ 
Using this, together with the fact that $V+\bar{U}$ is in $L^{4}(0,T;L^{6}(\Omega))$, allows us to use an absorbing argument to conclude that $\mathcal E(t)=0$ for all $t\in(0,T)$. This concludes the proof of Theorem \ref{theom.locWSU} and of the structure result mentioned in Remark \ref{rem.struc}. Notice that the quantitative estimate \eqref{e.estUL4L6} directly follows from the linear estimate \eqref{e.linestFLR}, the estimate \eqref{e.estV} for the mild solution and the definition of $\bar\kappa$ in \eqref{e.condbarkappa}.

\subsection{Proof of Theorem \ref{theom.locWSUbis}}

The proof of this result only differs from the proof of Theorem \ref{theom.locWSU} in the treatment of the linear evolution of the initial data $b$. Let us outline the changes, which concern only Step 1-a of Section \ref{subsec.wsu}.

We take $\varphi\in C^\infty_c(B_\frac{15}{16})$ a cut-off function such that $\varphi\equiv 1$ on $B_{\frac78}$, $0\leq\varphi\leq 1$ and $|\nabla\varphi|\leq 32$. Thanks to the Bogovskii operator \cite{Galdibook} on the annulus $B_\frac{15}{16}\setminus B_{\frac78}$, there exists a divergence-free extension $E(b)$ of $\varphi b$ defined on $\R^3$ that is compactly supported in $B_\frac{15}{16}$, $E(b)=b$ on $B_{\frac78}\subset B_{r_0}$ and such that 
\begin{equation*}
\|E(b)\|_{L^4(B_1)}\leq C\|b\|_{L^4(B_1)}.
\end{equation*}
Let $\Gamma$ be the heat kernel on $\R^3$. We note that $\Gamma(\cdot-t_0)\star E(b)$ is divergence-free, which implies 
\begin{equation*}
\int\limits_{\partial B_{r_0}}\big(\Gamma(\cdot-t_0)\star E(b)\big)\cdot n=0.
\end{equation*}
Moreover, it follows from \cite[Lemma IV.3.2]{FLR77} that 
\begin{equation}\label{e.intbdaryextendedb}
\big\|\Gamma(\cdot-t_0)\star E(b)\big\|_{L^4(\partial B_{r_0}\times (t_0,0))}\leq C\big((-t_0)^\frac18+(-t_0)^\frac14\big)\|E(b)\|_{L^4(\R^3)}\leq C\|b\|_{L^4(B_1)},
\end{equation}
and 
\begin{equation}\label{e.intintextendedb}
\big\|\Gamma(\cdot-t_0)\star E(b)\big\|_{L^4(t_0,0;L^6(B_{r_0}))}\leq C(-t_0)^\frac18\|E(b)\|_{L^4(\R^3)}\leq C\|b\|_{L^4(B_1)},
\end{equation}
where $C\in (0,\infty)$ denotes as usual a universal constant. 
We can now construct the linear solution $\bar U$ as follows. Let 
\begin{equation*}
\tilde a=a-\Gamma(\cdot-t_0)\star E(b)|_{\partial B_{r_0}}.
\end{equation*}
By Theorem \ref{theo.flrex} of Farwig, Kozono and Sohr \cite{FKS11}, with $s=r=4$ and $q=6$, there exists a unique very weak solution $U_{\tilde a}\in L^4(t_0,0;L^6(B_{r_0}))$ in the sense of Definition \ref{def.vws} to the Stokes system with $F=b=0$ and boundary data $\tilde a$. Hence we construct $\bar U$ the unique very weak solution in the sense of Definition \ref{def.vws} to the Stokes system with $F=0$ as follows:
\begin{equation*}
\bar U=U_{\tilde a}+\Gamma(\cdot-t_0)\star E(b).
\end{equation*}
Hence, the estimates \eqref{e.estUab} and \eqref{e.intintextendedb} lead to
\begin{equation}\label{e.linestFLRbis}
\|\bar U\|_{L^4(t_0,0;L^6(B_{r_0}))}\leq K\big(\|a\|_{L^4(\partial B_{r_0}\times (t_0,0))}+\|b\|_{L^4(B_{r_0})}\big),
\end{equation}
where $K\in (0,\infty)$ denotes a universal constant.

The rest of the proof, Step 1-b to Step 2 of Section \ref{subsec.wsu} are identical, replacing the constant $K_{\Omega,T}$ by $K$. The definition of $\bar\kappa$ becomes
\begin{equation}\label{e.condbarkappabis}
\bar\kappa:=\frac{C_2}{4K}.
\end{equation}

\begin{remark}[on the compatibility conditions]
We emphasize that in the compatibility conditions \eqref{e.struc}, the condition\footnote{We stress that this condition on $a$ comes from the linear result Theorem \ref{theo.flrex} taken from \cite{FKS11}; these conditions are also present in the work \cite{FLR77}.} on the boun\-dary data $a$ is much weaker than the condition on $b$. Indeed we only ask that $a\cdot n$ has mean zero on $\partial\Omega$, while $b\cdot n$ is required to vanish identically on $\partial\Omega$. This fact is the essential redeeming feature that allows the above argument to work. Notice that owing to the fact that $\Gamma(\cdot-t_0)\star E(b)$ is divergence-free, $\big(\Gamma(\cdot-t_0)\star E(b)\big)\cdot n$ has mean zero on $\partial B_{r_0}$ but is not necessarily zero identically.
\end{remark}

\section{Proof of the epsilon regularity result}
\label{sec.proof}

This section is devoted to the proof of Theorem \ref{theom.epreg}. We directly prove the quantitative version, i.e. the bound \eqref{e.concl}. We assume that $U$ is a finite-energy weak solution to the Navier-Stokes equations, i.e. \eqref{e.distribnse} and \eqref{e.en} hold. In addition, we assume that $U$ belongs to $C^\infty(B_1\times(-1,T))$ for all $T\in(-1,0)$, see Footnote \ref{foot.fts}, and satisfies the assumption \eqref{e.HL4+} with $0<\ep\leq\bar\ep$ and $\bar\ep$ defined in \eqref{e.condkappa*}.

\medskip

\noindent{\bf Step 1: finding good space and time scales.}\\
We need to select a space slice and a time slice. The choices are completely independent, so the order in which we select the slices does not matter. Let us first select a space slice $r_0\in(\frac58,\frac78)$. 
By the coarea formula and Fubini's theorem, we have
\begin{equation*}
\int\limits_{-1}^0\int\limits_{B_1}|U|^{4}\, dxdt=\int\limits_{-1}^0\int\limits_0^1\int\limits_{\partial B_r}|U|^{4}\, dS_rdrdt=\int\limits_0^1\int\limits_{-1}^0\int\limits_{\partial B_r}|U|^{4}\, dS_rdtdr.
\end{equation*}
Therefore, the pigeonhole principle implies that there exists $r_0\in(\frac58,\frac78)$ such that
\begin{equation}\label{e.spaceslice}
\int\limits_{-1}^0\int\limits_{\partial B_{r_0}}|U|^{4}\, dS_{r_0}dt\leq 4\int\limits_{-1}^0\int\limits_{B_1}|U|^{4}\, dxdt.
\end{equation}
We now select a time slice. By the pigeonhole principle, there exists $t_0\in(-1,-\frac34)$ such that
\begin{equation}\label{e.timeslice}
\int\limits_{B_1}|U|^{4}(x,t_0)\, dx\leq 4\int\limits_{-1}^0\int\limits_{B_1}|U|^{4}\, dxdt.
\end{equation}
From now on, we call $a:=U|_{\partial B_{r_0}\times (-1,0)}$ and $b:=U(\cdot,t_0)$. By \eqref{e.spaceslice} we have $a\in L^{4}(\partial B_{r_0}\times (-1,0))$ and by \eqref{e.timeslice} we have $b\in L^{4}(B_1)$.  
Moreover, 
\begin{equation*}
\|a\|_{L^4(\partial B_{r_0}\times (t_0,0))}+\|b\|_{L^4(B_{1})}\leq 2\sqrt{2}\|U\|_{L^4(Q_1)}\leq 2\sqrt{2}\ep< 2\sqrt{2}\bar\ep\leq\bar\kappa,
\end{equation*}
with $\bar\kappa$ defined by \eqref{e.condbarkappabis}, on condition that $\bar\ep\leq\frac{\bar\kappa}{2\sqrt{2}}$.

\medskip

\noindent{\bf Step 2: applying weak-strong uniqueness.}\\ 
We now apply the weak-strong uniqueness result of Theorem \ref{theom.locWSUbis} to the solution $U$ on $B_{r_0}\times (t_0,0)$ with the data $a$ and $b$ from Step 1 above. By the quantitative estimate \eqref{e.estUL4L6} it follows that 
\begin{equation}\label{e.critint}
\|U\|_{L^4(t_0,0;L^6(B_{r_0}))}\leq 2\sqrt{2}\ep K(1+C_0C_2).
\end{equation}

\medskip

\noindent{\bf Step 3: conclusion via Lady\v{z}enskaja-Prodi-Serrin.}\\ 
We can now argue in a similar spirit to \cite{sereginnewLPS,BP18}, except we use  the bound \eqref{e.critint} to set up a contraction mapping\footnote{For such a contraction mapping, see for instance \cite[Lemma 12]{BP18}.} related to the localized vorticity equation rather than the velocity.
  In particular, there exists $\bar\eta$ such that for any 
\begin{equation}\label{e.condkappa*}
0<\ep<\frac{\bar\eta}{2\sqrt{2}K(1+C_0C_2)}=:\bar\ep
\end{equation}
we have 
$$\|U\|_{L^{6}(Q_{\frac 12})}\leq M(1+\ep). $$
With this, we can bootstrap in the same way as in \cite{serrin} to obtain the estimate \eqref{e.concl}. This concludes the proof of Theorem \ref{theom.epreg}.

\begin{remark}[on the linear solution]
In this proof, notice that we are not able to assert that the solution $\bar U$ to the linear Stokes problem with rough boundary data $a\in L^4(B_{r_0}\times (t_0,0))$ (see the structure result in Remark \ref{rem.struc}) is smooth in space. However, the linear result is pivotal in order to establish that the original solution $U$ has improved critical integrability, hence is smooth in space. Notice that we obtain $L^{\infty}$ time integrability by appealing to the $L^{\infty}_{t}L^{2}_{x}$ control granted by being finite-energy. We cannot bootstrap further in time without controlling the pressure $p$ up to time 0.
\end{remark}

\begin{remark}[on the half-space]
It remains an open problem as to whether such a proof can be done for establishing epsilon regularity results near a boundary. Indeed, the linear results of Farwig, Kozono and Sohr \cite{FKS11} and of Fabes, Lewis and Rivi\`ere \cite{FLR77} ask for smoothness of the domain $\Omega$, see for instance Theorem \ref{theo.flrex} above. However, we are unable to carry out the slicing procedure of Step~1 above
near a smooth boundary.
\end{remark}

\begin{remark}[comparison with Struwe \cite{struwe1988partial}]
Struwe's approach to the five-dimensional stationary Navier-Stokes equations is technically different; it involves iterating the energy on a sequence of balls, which we avoid. A version of our proof should work in the stationary setting of~\cite{struwe1988partial} as well. There, the smallness condition in $L^4$ could be replaced by smallness in $H^1$, since in five dimensions, the slicing procedure yields boundary data in $H^1(\p B_{r_0}) \subset L^4(\p B_{r_0})$ (four-dimensional Sobolev embedding). The slicing technique has proven useful for the six-dimensional stationary Navier-Stokes equations~\cite{frehse1996existence} and advection-diffusion equation with rough drifts~\cite{albrittondong}, among others.
\end{remark}

\subsection*{Acknowledgement}
DA was supported by NSF Postdoctoral Fellowship Grant No. 2002023. DA is also grateful to ENS Paris for supporting his academic visit to Paris during which this research was initiated. DA thanks Vladim{\'i}r \v{S}ver{\'a}k, who contributed an offhanded comment in 2017 which played a role in the genesis of this paper. CP is partially supported by the Agence Nationale de la Recherche,
project BORDS, grant ANR-16-CE40-0027-01, project SINGFLOWS, grant ANR-
18-CE40-0027-01, project CRISIS, grant ANR-20-CE40-0020-01, by the CY
Initiative of Excellence, project CYNA (CY Nonlinear Analysis) and project CYFI (CYngular Fluids and Interfaces).

\small 
\bibliographystyle{abbrv}
\bibliography{epreg-viaWSU}

\end{document}